\newtheorem{theorem}{Theorem}[section]
\newtheorem{proposition}[theorem]{Proposition}
\theoremstyle{definition}
\newtheorem{definition}[theorem]{Definition}
\theoremstyle{remark}
\numberwithin{equation}{section}
\newcommand{\BC}{{\mathbb{C}}}
\newcommand{\BH}{{\mathbb{H}}}
\newcommand{\BN}{{\mathbb{N}}}
\newcommand{\BQ}{{\mathbb{Q}}}
\newcommand{\BR}{{\mathbb{R}}}
\newcommand{\BZ}{{\mathbb{Z}}}
\newcommand{\Fe}{{\mathfrak{e}}}
\newcommand{\CD}{{\mathcal D}}
\newcommand{\CF}{{\mathcal F}}
\newcommand{\CM}{{\mathcal M}}
\newcommand{\CO}{{\mathcal O}}
\newcommand{\CW}{{\mathcal W}}
\newcommand{\set}[1]{\left\lbrace #1 \right\rbrace}		
\newcommand{\map}[3]{#1 : #2 \rightarrow #3} 
\DeclareMathOperator{\Gr}{Gr}
\DeclareMathOperator{\SL}{SL}
\DeclareMathOperator{\Mp}{Mp}
\DeclareMathOperator{\GL}{GL}
\newcommand{\Spin}{\operatorname{Spin}}
\newcommand{\bs}{\backslash}
\newcommand{\diff}[1]{\partial_{#1}}
\newcommand{\invdiff}[1]{\diff{#1}^{-1}}
\DeclareMathOperator{\tr}{tr}
\renewcommand{\O}{\operatorname{O}}
\newcommand{\abs}[1]{\left\lvert #1 \right\rvert}
\newcommand{\regint}{\int^{\text{reg}}}
\newcommand{\siegTheta}[2]{\Theta^{(#1)}_{#2}}
\newcommand{\siegtheta}[2]{\theta^{(#1)}_{#2}}
\DeclareMathOperator{\sgn}{sgn}
\newcommand{\groupring}[1]{\BC[#1'/#1]}
\newcommand{\bpsm}{\begin{psmallmatrix}}
\newcommand{\epsm}{\end{psmallmatrix}}
\begin{document}

\title[Fourier coefficients of meromorphic modular forms]{On the divisibility properties of the Fourier coefficients of meromorphic Hilbert modular forms}

\author{Baptiste Depouilly}
\address{}
\curraddr{}
\email{}
\thanks{}

\subjclass[]{}

\date{}

\dedicatory{}

\begin{abstract}
    Following Zagier, this work studies the rationality and divisibility of Fourier coefficients of meromorphic Hilbert modular forms associated with real quadratic fields, using theta lifts and weak Maass forms. We establish conditions where these coefficients are rational with bounded denominators and demonstrate divisibility properties under suitable linear combinations.
\end{abstract}

\maketitle



\section{Introduction}

\subsection{Fourier coefficients of meromorphic Hilbert modular forms}

Fix a real quadratic field $F$ of discriminant $D$, denote by $\invdiff{F}$ its inverse different, and fix an even integer $k \geq 4$. In \cite{zagier1975}, Zagier defined a family of Hilbert modular forms
\begin{equation}
    \omega_m(z_1, z_2) = \frac{(k-1)!}{2(2\pi)^k} \sum_{\substack{a.b \in \BZ \\ \nu \in \invdiff{F} \\ N(\nu) - ab = m}} (b z_1z_2 + \nu z_1 + \nu'z_2 + a )^{-k}
\end{equation}
for integers $m \in \frac{1}{D}\BZ_{\geq 0}$, which transform with parallel weight $k$ with respect to the Hilbert modular group $\SL_2(\CO_F)$.
The same construction can be realized taking $m < 0$ to produce a meromorphic Hilbert modular form which vanishes at the cusps and has poles at the Hirzebruch-Zagier divisors of $\SL_2(\CO_F) \bs (\BH \times \BH)$ of discriminant $m$. 
Readers might notice that our normalization differs from that of Zagier; the role of the normalizing constant will become clearer once we state our result.
In a Weyl chamber denoted $\CW$, the $\omega_m$'s admit a Fourier expansion of the form 
\[\omega_m(z_1, z_2) = \sum_{\nu \in \invdiff{F}} c_{\nu} e(\tr(\nu z)), \]
where $e(z) := e^{2\pi i z}$ and $\tr(\nu z) := \nu z_1 + \nu' z_2$. In this article, we exhibit rationality and divisibility properties for the Fourier coefficients $c_\nu$ when $m < 0$.

Let us set up the notation for our result. Denote by $\rho_L$ the Weil representation attached to $L:=\CO_F$ viewed as a rank $2$ $\BZ$-module, and $\overline{\rho}_L$ its dual representation. 
For $\nu \in \invdiff{F}$, let $\nu_0$ be the unique element of $\invdiff{F}$ such that $\BQ \nu \cap \invdiff{F} = \BZ \nu_0$. We call $\nu_0$ the primitive part of $\nu$ and denote by $\ell_\nu$ the integer such that $\nu = \ell_\nu \nu_0$.
In this setting, we will leverage the fact that for a fixed negative $m \in \frac{1}{D}\BZ_{< 0}$, $\omega_m$ can be realized as the Doi-Naganuma theta lift of a vector-valued Maass-Poincaré series to prove the following result.
\begin{theorem}\label{thm:maintheorem} 
    For $m < 0$ and an even integer $k \geq 4$, if the space of cusp forms $S_{k,\overline{\rho}_L}$ is trivial, then, up to multiplication by an integer that doesn't depend on $\nu$,
    the $\nu$-th coefficient of $\omega_m$ is divisible by $(D\ell_\nu N(\nu_0))^{k-1}$ for all $\nu \in \invdiff{F}$.
\end{theorem}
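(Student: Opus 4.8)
The plan is to realize $\omega_m$ as a regularized Doi--Naganuma theta lift and to transfer rationality and integrality from the input form to the output coefficients. Concretely, I would write $\omega_m = \Phi(F_m)$, where $F_m$ is the vector-valued Maass--Poincar\'e series of weight $2-k$ attached to $\rho_L$ whose principal part is the single normalized term indexed by $m \in \frac{1}{D}\BZ_{<0}$, and $\Phi$ is the regularized lift whose Fourier expansion is available from the earlier part of the paper. The first and crucial observation is that the hypothesis $S_{k,\overline{\rho}_L} = 0$ is not merely technical: the shadow $\xi_{2-k}(F_m)$ is a cusp form in $S_{k,\overline{\rho}_L}$, so the vanishing of this space forces $\xi_{2-k}(F_m) = 0$, i.e.\ $F_m$ is in fact weakly holomorphic, $F_m \in M^{!}_{2-k,\rho_L}$. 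This collapses the harmonic Maass form to an object with an honest $q$-expansion and removes all non-holomorphic contributions from the lift.

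Next I would establish rationality with bounded denominators for the Fourier coefficients of $F_m$. Because its principal part is rational (a single term at $m$) and because $S_{k,\overline{\rho}_L} = 0$, the pairing between weakly holomorphic forms of weight $2-k$ and holomorphic forms of weight $k$ for $\overline{\rho}_L$ is governed entirely by principal parts; there is no cuspidal obstruction, and the coefficients of $F_m$ are therefore forced to be rational. A standard integrality argument for vector-valued weakly holomorphic forms then yields a single integer $N$, independent of $\nu$, clearing all denominators of $F_m$ simultaneously. This $N$ is the promised ``integer that doesn't depend on $\nu$.''

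I would then feed this into the explicit Fourier expansion of $\Phi$. In the Weyl chamber $\CW$, the coefficient $c_\nu$ is a finite sum of coefficients of $F_m$ weighted by arithmetic factors attached to the lattice vectors representing $\nu$ along the relevant Hirzebruch--Zagier datum; the primitivity decomposition $\nu = \ell_\nu \nu_0$ organizes these factors into a divisor sum, and the normalizing constant $\frac{(k-1)!}{2(2\pi)^k}$ is exactly what cancels the $\Gamma$- and $(2\pi)$-factors produced by the theta integral (explaining the choice of normalization). The content of this step is to show that the surviving arithmetic factor carries the full power $(D\ell_\nu N(\nu_0))^{k-1}$: the weight-$(k-1)$ homogeneity of the polynomial in the theta kernel supplies the $N(\nu_0)^{k-1}$ growth together with the factors $d^{k-1}$ in the sum over divisors $d \mid \ell_\nu$, while the factor $D^{k-1}$ enters through the discriminant appearing in the index of the matched coefficient of $F_m$. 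Combining the two inputs, $c_\nu$ equals $(D\ell_\nu N(\nu_0))^{k-1}$ times a rational number whose denominator divides $N$, which is precisely the asserted divisibility.

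I expect the main obstacle to be this last step: pinning down the exact arithmetic factor in the theta-lift expansion and verifying that the power $(D\ell_\nu N(\nu_0))^{k-1}$ factors out cleanly and integrally. In particular one must control the $p$-adic valuations of the divisor sums and of the Gauss sums / lattice-point counts appearing in the local factors, uniformly in $\nu$ and especially at the primes dividing $D$, so that no extra denominators beyond $N$ are introduced. The rationality and weak-holomorphy reductions are robust; it is the precise bookkeeping of the lift's coefficients, and the matching of the homogeneity degree to the exponent $k-1$, that will require the most care.
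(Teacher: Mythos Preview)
Your outline correctly identifies the first half of the argument: the hypothesis $S_{k,\overline{\rho}_L}=0$ forces the weight $2-k$ Maass--Poincar\'e series $F_{\beta,m,2-k}$ to be weakly holomorphic (via the Bruinier--Funke pairing), and hence its Fourier coefficients are rational with bounded denominators. That matches the paper. The gap is in the final step, precisely the one you flagged as delicate.

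Two intertwined issues. First, in the paper $\omega_{\beta,m}$ is the Doi--Naganuma lift of the \emph{weight $k$} Poincar\'e series $F_{\beta,m,k}(\tau,k/2)$, not of the weight $2-k$ form; the theta kernel $\Theta^{(k)}_L$ pairs against weight-$k$ input. Second, and more substantively, the Borcherds formula for this lift gives the $\nu$-th coefficient as
\[
\sum_{d\mid \ell_\nu} d^{\,k-1}\, c_{\nu/d}\bigl(N(\nu)/d^2\bigr),
\]
where $c_\gamma(n)$ are the coefficients of the weight-$k$ input. The only arithmetic weight supplied by the kernel is $d^{k-1}$; the term $d=1$ contributes the bare coefficient $c_\nu(N(\nu))$ with no prefactor. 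So ``homogeneity of the polynomial in the theta kernel'' cannot account for the factor $N(\nu_0)^{k-1}$: knowing only that the input has bounded denominators yields nothing beyond integrality.

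The missing mechanism is the Bol operator $\CD^{k-1}$. The paper uses the identity $\CD^{k-1}(F_{\beta,m,2-k}) = m^{k-1} F_{\beta,m,k}$, and since $\CD^{k-1}$ multiplies the $(\gamma,n)$-th Fourier coefficient by $n^{k-1}$, one concludes that the $(\gamma,n)$-th coefficient of $(mD)^{k-1}\delta_{\beta,m}\,F_{\beta,m,k}$ is an integer divisible by $(Dn)^{k-1}$. Feeding $n=N(\nu)/d^2$ into the lift formula then produces $\sum_{d\mid \ell_\nu}(DN(\nu)/d)^{k-1}\tilde c_{\nu/d}$ with integral $\tilde c$, and writing $N(\nu)=\ell_\nu^2 N(\nu_0)$ extracts the common factor $(D\ell_\nu N(\nu_0))^{k-1}$. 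Without the Bol step linking the two weights, the factor $N(\nu_0)^{k-1}$ simply never appears, and your proposed bookkeeping of Gauss sums and local valuations will not manufacture it.
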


We obtain a similar result when the space $S_{k, \overline{\rho}_L}$ is non-trivial by taking linear combinations of the $\omega_m$'s. The precise statement is given in \Cref{theo:mainTheorem}.

\subsection{Numerical examples} We list some coefficients of $\omega_{m}$ for $F = \BQ(\sqrt{5})$, $m=-1/5$, and weight 4.
We observe the following:
\vspace{5pt}

\begin{center}
  \renewcommand{\arraystretch}{1.8}
  \begin{tabular}{|| c | c | c | c | c | c ||}
    \hline
    $\nu$ & $\ell_\nu$ & $(D \ell_\nu N(\nu_0))^{k-1}$ & $N(\nu)$ & $c_{\nu}$ & $c_{\nu}$ factored \\
    \hline \hline
    2 & 2 & $ 2^3 \cdot 5^3$ & $2^2$ & $3050000$ & $2^4 \cdot 5^5 \cdot 61$\\
    \hline
    $\frac{-2+2\sqrt{5}}{\sqrt{5}}$ & $4$ & $2^6$ & $2^4 / 5$ & $201728$ & $2^6 \cdot 3 \cdot 1051$ \\
    \hline
    3 & $3$ & $3^3 \cdot 5^3$ & 9 & $2649037500$ & $2^2 \cdot 3^3 \cdot 5^5 \cdot 47 \cdot 167$ \\
    \hline
    $\frac{-3+3\sqrt{5}}{2\sqrt{5}}$ & $3$ & $3^3$ & $9/5$ & $-18198$ & $-1 \cdot 2 \cdot 3^3 \cdot 337$ \\
    \hline
    $\frac{-2}{\sqrt{5}} + 3$ & $1$ & $41$ & $41/5$ & $334266850$ & $2 \cdot 5^2 \cdot 41^3 \cdot 97$ \\
    \hline
    $\frac{-1}{2\sqrt{5}} + \frac{5}{2}$ & $1$ & $31^3$ & $31/5$ & $27675839$ & $31^3 \cdot 929$ \\
    \hline
    $\frac{5+7\sqrt{5}}{2\sqrt{5}}$ & $1$ & $(5 \cdot 11)^{3}$ & $11$ & $20708696250$ & $2 \cdot 3^3 \cdot 5^4 \cdot 11^3 \cdot 461$ \\
    \hline 
    $\frac{5+5\sqrt{5}}{2\sqrt{5}}$ & $5$ &  $5^3$ & $5$ & $31562625$ & $3 \cdot 5^3 \cdot 17 \cdot 4951$ \\
    \hline
  \end{tabular}
\end{center}
\vspace{5pt}
These were numerically computed using Sage and the formula given in \Cref{theo:fourierExpOmega}.

\subsection{How this article is organized} In \Cref{sec:HarmWeakForms}, we set up the notation and recall important results in the theory of harmonic Maass forms.
In \Cref{sec:OrthModForms}, we introduce orthogonal modular forms and recall their connection with Hilbert modular forms. We additionally introduce our object of study, the functions $\omega_{\beta, m}(Z)$, we detail the location of their poles and their Fourier expansion.
In \Cref{sec:DoiNagLift}, we define the vector-valued Doi-Naganuma theta lift and compute the image of the non-holomorphic Maass-Poincaré series through this lift.
Finally, in \Cref{sec:IntAndDiv}, we study the Fourier coefficients of $\omega_{\beta, m}(Z)$ and prove our main result.


\section{Harmonic weak Maass forms for the Weil representation} \label{sec:HarmWeakForms}

We fix here some of the notation we will use in this paper. We will have $D$ be a positive fundamental discriminant and $F$ be the unique (up to isomorphism) real quadratic field of discriminant $D$.
Additionally, $\CO_F$ will denote the ring of integers of $F$ and $\CO_F^*$ is its group of units. We will use the symbol $\invdiff{F}$ for the inverse different of $F$.
If $x \in F$ is a number field element, $x'$ will denote its Galois conjugate, $N(x)$ its norm, and $\tr(x)$ its trace.

\subsection{The Weil representation}

Let $\Mp_2(\BR)$ denote the double cover of $\SL_2(\BR)$ given by pairs $(M, \phi)$, where 
$M = \bpsm a & b \\ c & d \epsm \in \SL_2(\BR)$ and $\map{\phi}{\BH}{\BC}$ is a holomorphic square-root of $c\tau + d$.
The subgroup $\Mp_2(\BZ)$ is generated by the two elements $S=\left(\bpsm 0 & -1 \\ 1 & 0 \epsm, \sqrt{\tau}\right)$ and $T = \left(\bpsm 1 & 1 \\ 0 & 1 \epsm, 1\right)$.
Now, let $L$ be an even lattice endowed with a quadratic form $Q$ of signature $(b^+, b^-)$. We denote by $(\cdot, \cdot)$ the bilinear form associated to $Q$ and by $L'$ the dual lattice of $L$. 
The quotient $L'/L$ is a finite abelian group called the discriminant group of $L$. Moreover, consider the group ring $\groupring{L}$ with its standard basis
$\set{\Fe_\beta}_{\beta \in L'/L}$ and the hermitian inner product $\langle \Fe_\beta, \Fe_\gamma \rangle = \delta_{\beta, \gamma}$.

There is a unique unitary representation $\map{\rho_L}{\Mp_2(\BZ)}{\GL(\groupring{L})}$ called the Weil representation defined by 
\begin{align*}
    \rho_L(S)\Fe_\beta & = \frac{\sqrt{i}^{b^- - b^+}}{\sqrt{\abs{L'/L}}}\sum_{\gamma \in L'/L} \Fe_\gamma(-(\beta, \gamma)) \ \text{and} \ \rho_L(T) \Fe_\beta = \Fe_\beta(Q(\beta)),
\end{align*}
where we used the notation $\Fe_\beta(x) = e(x)\Fe_\beta = e^{2\pi i x}\Fe_\beta$.
If $L^-$ denotes the pair $(L, -Q)$, then the corresponding Weil representation $\rho_{L^-}$ is the dual representation $\overline{\rho}_L$ to $\rho_{L}$.

\subsection{Vector-valued harmonic Maass forms}

Let $k \in \frac{1}{2}\BZ$. For functions $f: \BH \rightarrow \groupring{L}$ and $(M, \phi) \in \Mp_2(\BZ)$, we define the 
slash operator of weight $k$ by 
\[(f \vert_k (M,\phi)) (\tau) = \phi(\tau)^{-2k} \rho_L^{-1}(M,\phi)f(M\tau).\]
Let $\Gamma$ be a subgroup of $\Mp_2(\BZ)$ of finite index. We define the space $H_{k, \rho_L}(\Gamma)$ of \emph{weak Maass forms} of weight $k$ with representation $\rho_L$ 
for the group $\Gamma$ similarly as Bruinier-Funke in \cite{bruinierfunke2004}.

Additionally, we recall the definitions of the Maass lowering and raising operators in weight $k$ on smooth functions $g: \BH \rightarrow \groupring{L}$.
They act componentwise and are given by 
\[L_k = -2iv^2 \frac{\partial}{\partial \overline{\tau}} \quad \text{and} \quad R_k = 2iv^{-k}\frac{\partial}{\partial \tau} v^k = 2i \frac{\partial}{\partial \tau} + \frac{k}{v}.\]
They lower, respectively raise, the weight of a form by 2. We can iterate these operators and use the notation 
\begin{align*}
    R_{k}^n & = R_{k + 2(n-1)} \circ \cdots \circ R_{k+2} \circ R_k.
\end{align*}

\subsection{Non-holomorphic Poincaré series} 
Let $k \in \frac{1}{2}\BZ$. Denote by  $M_{\mu,\nu}$ the usual $M$-Whittaker function. 
For fixed values $s \in \BC$ and $m \in \frac{1}{D}\BZ$, define
\begin{equation}
    \CM_{m,k}(v,s) = \left\{\begin{array}{ll}
        \Gamma(2s)^{-1}(4\pi\abs{m}v)^{-k/2}M_{\frac{k}{2}\sgn(m), s-\frac{1}{2}}(4\pi\abs{m}v), & m \neq 0, \\
        v^{s-\frac{k}{2}}, & m=0.
    \end{array}\right.
\end{equation}
The function 
\[\varphi_{m,k}(\tau,s) := \CM_{m,k}(v,s)e^{2\pi i mu}\]
is then an eigenfunction of the weight $k$ hyperbolic Laplacian operator. At $s = k/2$, we have
\begin{equation}
    \CM_{m,k}(v,k/2) = e^{-2\pi m v} \Gamma(k)^{-1}.
\end{equation}
Finally, we follow \cite{JeonKangKim} and use $\varphi_{m,k}(\tau,s)$ as a kernel function to define the Maass-Poincaré series:
\begin{definition}
    With the above notation we define the Poincaré series $F_{\beta, m}$ of index $(\beta,m)$ by 
    \begin{equation}
        F_{\beta, m}(\tau, s) = \frac{1}{2} \sum_{(M, \phi) \in \tilde{\Gamma}_\infty \bs \Mp_2(\BZ)} \left[\varphi_{m,k}(\tau,s) \Fe_\beta\right] \vert_k (M, \phi),
    \end{equation}
    where $\tau = u+iv \in \BH$ and $s = \sigma + it \in \BC$ with $\sigma > 1$.
\end{definition}

This Poincaré series transforms like a modular form of weight $k$ with respect to $\Mp_2(\BZ)$ and converges absolutely and uniformly on compacta for $\Re(s) > 1$. Moreover, $F_{\beta, m}(\tau,s)$
is an eigenfunction of the weight $k$ hyperbolic Laplacian.

The Fourier expansion of the Maass-Poincaré series has been computed by Bruinierin \cite{bruinier2002} in the case of negative weight $k$. We can also tackle the positive weight Maass-Poincaré
series by adjusting the definition of the $M$-Whittaker function, as is done in \cite{JeonKangKim}.

\begin{proposition}
    For $m \in \frac{1}{D}\BZ$, $\beta \in L'/L$, and $k \in \BZ$, $F_{\beta, m}(\tau,s)$ admits a Fourier expansion of the form 
    \[\CM_{m,k}(v,s)\left(\Fe_\beta(m u) + \Fe_{-\beta}(m u) \right) + \sum_{\gamma \in L'/L} \sum_{\substack{n \in \BZ + Q(\gamma)}} c_{m,k}(\gamma, n; s) \CW_{n,k}(v,s) \Fe_\gamma(n u).\]
    The exact formulae for the coefficients $c_{m,k}(\gamma, n; s)$ can be found in the previously cited references.
\end{proposition}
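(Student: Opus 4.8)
The plan is to unfold the Poincaré series against explicit coset representatives of $\tilde{\Gamma}_\infty \bs \Mp_2(\BZ)$ and then apply Poisson summation, exactly along the lines of Bruinier \cite{bruinier2002} and Jeon--Kang--Kim \cite{JeonKangKim}. Recall that $\tilde{\Gamma}_\infty$ is generated by $T$ together with the central element $Z = S^2 = (-I, i)$; a set of representatives for the remaining cosets is then indexed by coprime pairs $(c,d)$ with $c > 0$ and $d$ running modulo $c$, while the cosets with $c = 0$ reduce to $(I,1)$ and $Z$. I would split the defining sum accordingly into a $c = 0$ part and a $c > 0$ part, which will furnish the principal term and the remaining Fourier modes respectively.

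For the $c = 0$ part, the representative $(I,1)$ contributes $\CM_{m,k}(v,s)\Fe_\beta(m u)$ directly from the kernel $\varphi_{m,k}(\tau,s)\Fe_\beta$. The representative $Z$ contributes a second copy of the kernel acted on by $\rho_L^{-1}(Z)$: since $\rho_L(Z)$ sends $\Fe_\beta$ to $i^{b^- - b^+}\Fe_{-\beta}$ and the metaplectic factor $\phi(\tau)^{-2k} = i^{-2k}$ for $Z$ is constant, these combine (using $k \in \BZ$ and the compatibility between $k$ and the signature of $L$) to give exactly $\CM_{m,k}(v,s)\Fe_{-\beta}(m u)$. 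Together with the prefactor $\tfrac12$ this yields the principal part $\CM_{m,k}(v,s)\bigl(\Fe_\beta(m u) + \Fe_{-\beta}(m u)\bigr)$.

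The heart of the computation is the $c > 0$ part. Here I would, for each fixed $c$, first carry out the sum over $d \bmod c$: inserting the explicit formulas for $\rho_L$ on the representatives produces, in each Fourier mode $\Fe_\gamma(n u)$ with $\gamma \in L'/L$ and $n \in \BZ + Q(\gamma)$, a $\rho_L$-twisted Kloosterman sum $H_c(\beta, m; \gamma, n)$. The residual sum over the integer translations is then resolved by Poisson summation in $u$, and the Fourier transform of the translated $M$-Whittaker kernel $\CM_{m,k}$ against $e(-n u)$ is a standard Whittaker integral evaluating to the $W$-Whittaker normalization $\CW_{n,k}(v,s)$ times a $c$-dependent Bessel factor. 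Summing the Kloosterman sums against these Bessel factors over all $c$ assembles precisely the coefficients $c_{m,k}(\gamma, n; s)$ recorded in the cited references, and collecting terms produces the stated expansion.

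The main obstacle is bookkeeping rather than a single hard estimate: one must track the metaplectic cocycle $\phi(\tau)^{-2k}$ and the Weil-representation factors carefully through the unfolding so that the $d$-sums assemble into the correct $\rho_L$-twisted Kloosterman sums, and one must justify the interchange of summation and integration underlying Poisson summation, which is licensed by the absolute and locally uniform convergence of $F_{\beta, m}(\tau, s)$ for $\Re(s) > 1$. Since Bruinier treats negative integral weight and Jeon--Kang--Kim treat positive integral weight, the only additional check is that our kernel $\varphi_{m,k}$ and normalization $\CM_{m,k}$, specialized at $s = k/2$ via $\CM_{m,k}(v, k/2) = e^{-2\pi m v}\Gamma(k)^{-1}$, agree with theirs; this is immediate from the definitions, so the proposition follows from their computations.
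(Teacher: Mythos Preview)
The paper does not supply its own proof of this proposition: it simply states the shape of the Fourier expansion and defers both the derivation and the explicit formulae for $c_{m,k}(\gamma,n;s)$ to the cited references \cite{bruinier2002} and \cite{JeonKangKim}. Your sketch is exactly the standard argument carried out in those references---unfolding over $\tilde\Gamma_\infty\bs\Mp_2(\BZ)$, isolating the $c=0$ contribution as the principal term, and Poisson-summing the $c>0$ part to assemble $\rho_L$-twisted Kloosterman sums against Whittaker/Bessel factors---so there is nothing to compare: your approach \emph{is} the paper's (cited) approach.

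One minor bookkeeping remark: with two $c=0$ coset representatives $(I,1)$ and $Z$, the global prefactor $\tfrac12$ gives $\tfrac12\,\CM_{m,k}(v,s)\bigl(\Fe_\beta(mu)+\Fe_{-\beta}(mu)\bigr)$ rather than the full quantity; the missing $\tfrac12$ in the displayed expansion is a harmless normalization artifact (compare Bruinier's conventions), not a flaw in your reasoning.
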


\section{Orthogonal modular forms}\label{sec:OrthModForms}
 
\subsection{\texorpdfstring{The Grassmannian of $\BH^2$}{The Grassmannian of H2}}

We consider the lattice
\[L = \left\{
    \begin{pmatrix}
        a & \nu' \\ \nu & b
    \end{pmatrix}: a,b \in \BZ, \ \nu \in \CO_F
    \right\}\]
and the associated vector spaces $V := L \otimes \BQ$ and $V(\BR) := L \otimes \BR$.
We endow them with the quadratic form 
\[Q(X) := -\det(X),\] which corresponds to the bilinear form 
\[(X,Y) = -\tr(X Y^*),\]
where $\bpsm a & b \\ c & d \epsm^* = \bpsm d & -b \\ -c & a \epsm$. The real quadratic space $(V(\BR), Q)$ has signature 
$(2,2)$. 

An analysis of the Clifford algebra of $V(\BR)$ yields $\Spin_{V(\BR)} \cong \SL_2(\BR)^2$, which acts isometrically on $V(\BR)$ by 
\[(g_1, g_2) \bullet X = g_1 X g_2^t.\]
In particular, $g \in \SL_2(F)$ acts isometrically on $V(\BR)$ via
\[(g, g') \bullet X = gX(g')^t,\]
where $g'$ denotes the entry-wise Galois conjugate of the matrix $g$.
To any complex number $z = x+iy \in \BH$, we associate the matrix 
\[\begin{pmatrix}
    \sqrt{y} & x/\sqrt{y} \\
    0 & 1/\sqrt{y}
\end{pmatrix} \in \SL_2(\BR),\]\
and for $Z := (z_1, z_2) \in \BH^2$, we let $(g_{z_1}, g_{z_2}) \in \SL_2(\BR)^2$ act on the orthogonal basis 
\[e_1 = \begin{pmatrix}
    -1 & 0 \\ 0 & 1
\end{pmatrix}, \ e_2 = \begin{pmatrix}
    0 & 1 \\ 1 & 0
\end{pmatrix}, \ e_3 = \begin{pmatrix}
    1 & 0 \\ 0 & 1
\end{pmatrix}, \ e_4 = \begin{pmatrix}
    0 & -1 \\ 1 & 0
\end{pmatrix}\]
of $V(\BR)$. This yields a family of orthonormal bases
\[\begin{array}{cc}
    X_1(Z) = \displaystyle \frac{1}{\sqrt{y_1 y_2}} \begin{pmatrix}
        \Re(z_1 z_2) & \Re(z_1) \\ \Re(z_2) & 1
    \end{pmatrix}, & 
    X_2(Z) = \displaystyle \frac{1}{\sqrt{y_1 y_2}} \begin{pmatrix}
        \Im(z_1 z_2) & \Im(z_1) \\ \Im(z_2) & 0
    \end{pmatrix} \\
    X_3(Z) = \displaystyle \frac{1}{\sqrt{y_1 y_2}} \begin{pmatrix}
        \Re(\overline{z_1}z_2) & \Re(z_1) \\ \Re(z_2) & 1
    \end{pmatrix}, &
    X_4(Z) = \displaystyle \frac{1}{\sqrt{y_1 y_2}} \begin{pmatrix}
        \Im(\overline{z_1}z_2) & \Im(\overline{z_1}) \\ \Im(z_2) & 0
    \end{pmatrix}
\end{array}\] of $V(\BR)$, parametrized by $\BH^2$.

To draw the connection with the Grassmannian $\Gr(V)$ of positive definite planes of $V(\BR)$, we define the quantities 
\begin{align*}
    M(Z) &= \sqrt{y_1 y_2} (X_1(Z) + i X_2(Z)) = \begin{pmatrix}
        z_1 z_2 & z_1 \\ z_2 & 1
    \end{pmatrix}, \\
    M(Z)^\perp &= \sqrt{y_1 y_2} (X_3(Z) + i X_4(Z)) = \begin{pmatrix}
        \overline{z_1}z_2 & \overline{z_1} \\ z_2 & 1
    \end{pmatrix}.
\end{align*}
For $\gamma \in \SL_2(\BR)^2$, we have 
\[M(\gamma Z) = (c_1 z_1 + d_1)^{-1}(c_2 z_2 + d_2)^{-1}(\gamma \bullet M(Z)).\]
By abuse of notation, we denote by $M(Z)$, respectively $M(Z)^\perp$, the positive plane generated by the set $\{X_1(Z), X_2(Z)\}$, respectively the negative plane spanned by the pair $\{X_3(Z), X_4(Z)\}$.
The map $Z \mapsto M(Z)$ gives a bijection $\BH^2 \cong \Gr(V)$ that is compatible with the corresponding actions of $\SL_2(\BR)^2$.

\subsection{Hilbert modular forms as orthogonal modular forms} \label{sec:hilmfandorthmf}

We recall here the connection between orthogonal modular forms in signature $(2,2)$ and Hilbert modular forms. Let $\O^+(L)$ be the subgroup of $\SL_2(\BR^2)$ which maps $L$ to $L$, and let $L'/L$ be the discriminant group.
The latter is a finite abelian group on which the quadratic form $Q$ induces a $\BQ/\BZ$-valued quadratic form. We have a map $\O^+(L) \rightarrow O(L'/L)$
and we denote by $\Gamma(L)$ the subgroup of $\O^+(L)$ which acts trivially on $L'/L$. With $L$ as above, we have $\Gamma(L) = \SL_2(\CO_F)$.

Let $\Gamma \subset \SL_2(\BR)^2$ be a subgroup commensurable with $\SL_2(\CO_F)$. We say that a function $f: \BH^2 \rightarrow \BC$ is a Hilbert modular form
of weight $(k_1, k_2)$ for $\Gamma$ if it satisfies 
\[f(\gamma Z) = (c_1 z_1 + d_1)^{k_1}(c_2 z_2 + d_2)^{k_2}f(Z)\]
for all $\gamma = (\bpsm a_1 & b_1 \\ c_1 & d_1 \epsm, \bpsm a_2 & b_2 \\ c_2 & d_2 \epsm) \in \Gamma$ and $Z \in \BH^2$.
The previous paragraph allows us to identify Hilbert modular forms with modular forms for the group $\Gamma(L)$.

\subsection{Hilbert meromorphic forms} \label{sec:hilmf}
Zagier's original construction associated to a positive integer $m$ and an even integer $k \geq 4$ a Hilbert cusp form.
An analogous construction has been studied for vectors of negative norm, but before detailing it, let us define a quantity that will appear numerous time throughout the paper. For $Z = (z_1, z_2) \in \BH^2$ and $X = \bpsm a & \nu' \\ \nu &b \epsm \in V(\BR)$, let
\[q_Z(X) = (X, M(Z)) = -bz_1 z_2 + \nu z_1 + \nu' z_2 - a.\]
It obeys the following transformation law under the action of $\gamma \in \SL_2(\BR)^2$:
\[q_{\gamma Z}(X) = (c_1 z_1 + d_1)^{-1} (c_2 z_2 + d_2)^{-1} q_Z(\gamma^{-1}\bullet X).\]
Additionally, we denote by $X_{Z}$, respectively $X_{Z^\perp}$, the projection of $X$ on $M(Z)$, respectively on $M(Z)^\perp$.

Now, fix a vector $X \in V$ of negative norm, i.e. such that $Q(X)<0$, and consider the function 
\[\omega_{X}(Z) = \frac{(k-1)!}{2(2\pi)^k} \sum_{\gamma \in \Gamma_X \bs \Gamma} q_Z(\gamma^{-1} \bullet X).\]
It defines a Hilbert meromorphic modular form of weight $k$ for $\Gamma$ which vanishes at the cusps.
Fix $\beta \in L'/L$ and $m \in \frac{1}{D}\BZ_{<0}$. Since $\Gamma$ acts with finitely many orbits on the set 
\[L_{\beta, m} = \set{Y \in L+\beta: Q(Y) = m},\] 
we are able to define the average
\[\omega_{\beta, m}(Z) = \sum_{X \in \Gamma \bs L_{\beta, m}} \omega_{X}(Z) = \frac{(k-1)!}{2(2\pi)^k} \sum_{X \in L_{\beta,m}} q_Z(X)^{-k}.\]
Note in passing that by taking the sum $\sum_{\mu \in L'/L} \omega_{\mu, m}(Z)$, we recover the definitin given in the introduction.

\subsection{Hirzebruch-Zagier divisors and Weyl chambers}

We will want to study the Fourier expansions of the functions $\omega_m$ on a domain in which they are holomorphic. For $m < 0$, the poles of $\omega_m(z_1, z_2)$ are located at the following sets.

\begin{definition}
    Let $m < 0$. The \emph{Hirzebruch-Zagier divisor} $T_m$ of discriminant $m$ is defined as 
    \begin{align*}
        T_m & = \bigcup_{\substack{\lambda = \bpsm a & \nu' \\ \nu & b \epsm \in L'/ \set{\pm 1} \\ Q(\lambda) = m}} \set{Z = (z_1, z_2) \in \BH^2 : bz_1z_2 + \nu z_1 + \nu'z_2 + a = 0} \\ 
        & = \bigcup_{\substack{\lambda = \bpsm a & \nu' \\ \nu & b \epsm \in L'/\set{\pm 1} \\ Q(\lambda) = m}} \set{\left(\frac{-\nu' z_2 - b}{a z_2 + \nu}, z_2\right) \in \BH \times \BH}.
    \end{align*}
\end{definition}

Consider the set of pairs $(z_1, z_2) \in \BH^2$ such that their imaginary parts belong to
\[\BR^2_{>0} \setminus \bigcup_{\substack{\nu \in \invdiff{F} \\ N(\nu) = m}} \set{(y_1, y_2) \in \BR_{>0}^2 : \nu y_1 + \nu' y_2 = 0}.\]
This set isn't connected, but denote one of its connected components $C$, and let 
\[
    \CW = \set{Z = (z_1, z_2) \in \BH^2 : (\Im(z_1), \Im(z_2)) \in C}.
\]
Such sets are called \emph{Weyl chambers}.

\begin{theorem}[The Fourier expansion of $\omega_m$] \label{theo:fourierExpOmega}
    Let $m \in \frac{1}{D}\BZ_{< 0}$ and $z \in \CW$ with $\Im(z_1)\Im(z_2) > Dm$. The Fourier coefficient $c_{m\nu}$ of $\omega_m$ is given by
    \begin{align*}
        \left\{ \begin{array}{ll}
        \displaystyle \frac{2\pi}{\sqrt{D}} \left(\frac{N(\nu)}{\abs{Dm}}\right)^{(k-1)/2} \sum_{\alpha = 1}^\infty \frac{1}{\alpha} G_\alpha(Dm, \nu) I_{k-1}\left(\frac{4\pi}{\alpha} \sqrt{\abs{Dm}N(\nu)}\right), & \text{if } \nu \gg 0 \\
        \displaystyle (-1)^{k/2} \sum_{\substack{r \in \BN \\ r \mid \nu \sqrt{D} \\ \tr(\Im(z)\nu / r) > 0 \\ N(\nu/r) = Dm}} r^{k-1}, & \text{else.}
        \end{array} \right.
    \end{align*}
    Note that the condition $\tr(\Im(z)\nu/r)>0$ in the second sum only depends on the choice of a connected component $C$ in the definition of $\CW$ and not on $z$.
\end{theorem}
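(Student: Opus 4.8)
The plan is to compute the expansion directly from the defining series, in the spirit of Zagier's treatment of the positive-index case. After normalizing signs (harmless, since $k$ is even and the set $\set{X \in L' : Q(X)=m}$ is stable under $X \mapsto -X$), I would start from
\[\omega_m(Z) = \frac{(k-1)!}{2(2\pi)^k}\sum_{\substack{a,b\in\BZ,\ \nu\in\invdiff{F}\\ N(\nu)-ab=m}}(bz_1z_2+\nu z_1+\nu' z_2+a)^{-k}\]
and split the sum according to whether $b=0$ or $b\neq 0$. The key algebraic observation for the second range is the factorization obtained by completing the square: writing $a=(N(\nu)-m)/b$, one has $bz_1z_2+\nu z_1+\nu' z_2+a = \tfrac{1}{b}\bigl((bz_1+\nu')(bz_2+\nu)-m\bigr)$, which decouples the two complex variables up to the constant $-m$ and is what ultimately produces a Bessel function.

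For the $b=0$ terms one has $N(\nu)=m$ and the inner sum is $\sum_{a\in\BZ}(\nu z_1+\nu' z_2+a)^{-k}$. Applying the Lipschitz formula $\sum_{a\in\BZ}(w+a)^{-k}=\tfrac{(-2\pi i)^k}{(k-1)!}\sum_{n\geq1}n^{k-1}e(nw)$, valid for $\Im(w)>0$, collapses the normalizing constant and yields exponentials $e(n(\nu z_1+\nu' z_2))$ with integer coefficients $n^{k-1}$, the factor $(-i)^k=(-1)^{k/2}$ accounting for the sign in the statement. Here the sign of $\Im(\nu z_1+\nu' z_2)=\tr(\Im(z)\nu)$ decides which of the Lipschitz formula and its conjugate applies; by the definition of $\CW$ this sign is constant on the relevant set, which both justifies the computation and proves the closing remark that only the component $C$ matters. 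Re-indexing $n\nu$ by the output frequency then produces exactly the elementary divisor sum $(-1)^{k/2}\sum_r r^{k-1}$ of the second case.

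The $b\neq0$ terms carry the analytic content. I would fix $b$, split $\nu\in\invdiff{F}$ into residue classes so that the integrality of $a$ becomes the congruence $N(\nu)\equiv m \pmod{b}$ (equivalently a congruence on $DN(\nu)$ modulo $Db$, which is the source of the $Dm$ appearing everywhere), and apply two-dimensional Poisson summation to $(t,t')\mapsto\bigl((bz_1+t')(bz_2+t)-m\bigr)^{-k}$ over the rank-two lattice $\invdiff{F}\hookrightarrow\BR^2$. The sum over residue classes produces, for each dual frequency, a finite exponential sum that I expect to identify with the Gauss-type sum $G_\alpha(Dm,\nu)$, with $\alpha$ governed by $b$ and the covolume $1/\sqrt D$ of $\invdiff{F}$ supplying the prefactor $2\pi/\sqrt D$. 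The remaining continuous Fourier transform is the crux: one must evaluate $\int_{\BR^2}\bigl((bz_1+t')(bz_2+t)-m\bigr)^{-k}e(-\tr(\mu t))\,dt\,dt'$, and it is here—by shifting each contour into $\BC$ and recognizing the resulting hyperbola integral $\int(AB-m)^{-k}$ as an integral representation of the modified $I$-Bessel function—that the factor $I_{k-1}\bigl(\tfrac{4\pi}{\alpha}\sqrt{\abs{Dm}N(\nu)}\bigr)$ and the power $(N(\nu)/\abs{Dm})^{(k-1)/2}$ emerge.

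The main obstacle is precisely this Bessel integral together with the bookkeeping of constants and Gauss sums: one has to pin down the exponent $(k-1)/2$, the prefactor $2\pi/\sqrt D$, and the exact shape of $G_\alpha(Dm,\nu)$ from the residue-class sums, while justifying the interchange of summation and integration. Convergence forces the region $\Im(z_1)\Im(z_2)>Dm$, in which the rearranged double series converges absolutely; I would establish the identity there and note that, both sides being real-analytic on $\CW$, it persists on the whole chamber. As a consistency check and an alternative derivation I would compare against the Fourier coefficients of the Doi--Naganuma lift of the Maass--Poincaré series $F_{\beta,m}$ computed in \Cref{sec:DoiNagLift}, whose Kloosterman--Bessel shape matches the formula above.
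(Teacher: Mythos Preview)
The paper does not supply its own proof of this theorem: it simply records that the $m>0$ case is Zagier's original computation and that the $m<0$ case is carried out in detail in \cite{roshardtMT}. Your proposal is exactly the Zagier method those references use---splitting off the $b=0$ terms and evaluating them with the Lipschitz formula (this is where the Weyl-chamber sign condition enters and gives the elementary divisor sum), and treating the $b\neq 0$ terms by Poisson summation over $\invdiff{F}$, which produces the Gauss-type sums $G_\alpha(Dm,\nu)$ from the residue classes and the $I_{k-1}$ Bessel function from the two-variable Fourier integral of $((bz_1+t')(bz_2+t)-m)^{-k}$. So your approach coincides with the one the paper defers to; the only difference is that you also propose cross-checking against the Doi--Naganuma lift of $F_{\beta,m}$ from \Cref{sec:DoiNagLift}, which is a reasonable sanity check but not part of the cited argument.
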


These were first computed by Zagier in the case $m>0$, and the analogous computations in the case $m<0$ were done in detail in \cite{roshardtMT}.

\section{The Doi-Naganuma theta lift} \label{sec:DoiNagLift}

\subsection{The theta lift}
A common way to obtain new modular forms out of known ones is via theta lifts. Here we consider a vector-valued version of the Doi-Naganuma lift that takes
harmonic weak Maass forms for the Weil representation attached to $L \cong \BZ \oplus \BZ \oplus \CO_F$ to modular forms for $\O(2,2)$, which is tantamount to lifting to Hilbert modular forms for $\Gamma_F = \SL_2(\CO_F)$ as explained (see \Cref{sec:hilmfandorthmf}).

\begin{definition}
    Let $\tau = u+iv \in \BH$. Using the notation from \Cref{sec:hilmf}, we define
    \begin{equation}
        \siegTheta{k}{L}(\tau, Z) = v \sum_{X \in L'} \left(\frac{q_Z(X)}{\Im(z_1)\Im(z_2)}\right)^k e(Q(X_Z)\tau + Q(X_{Z^\perp})\overline{\tau}) \Fe_{X+L}.
    \end{equation}
    Here $e(w) = e^{2\pi i w}$, as usual. We will sometimes use the notation $\Fe_\gamma(\tau) = e(\tau) \Fe_\gamma$.
\end{definition}
This theta function transforms like a modular form of weight $k$ for the Weil representation $\rho_L$ with respect to the variable $\tau$ and its complex conjugate transforms  like a modular form of parallel weight $k$ for $\Gamma_F$ in the variable $Z$.

For a harmonic Maass form $f \in H_{k, \rho_L}$, we let the theta lift of $f$ be given by 
\begin{equation}
    \Phi(f, Z) = \regint_{\CF} \left\langle f(\tau), \siegTheta{k}{L}(\tau, Z) \right\rangle v^k \frac{dudv}{v^2},
\end{equation}
where $\CF$ denotes a fundamental domain for $\SL_2(\BZ)$.

\subsection{Unfolding the lift against Maass-Poincaré series}

In this section, we compute the lift of the Maass-Poincaré series $F_{\beta, m}$.

\begin{proposition} \label{prop:unfoldingPoincSeries}
    For $m\in \frac{1}{D}\BZ$ and $\beta \in L'/L$ with $Q(\beta) = m$, we have
    \begin{align}
        \Phi(F_{\beta, m}(\tau, k/2),Z) = \frac{2^{k+2}}{(k-1)!}\omega_{\beta, m}(Z).
    \end{align}
\end{proposition}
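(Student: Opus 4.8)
The proof is an unfolding computation; the one genuinely delicate aspect is the tracking of the multiplicative constants. Since $k\geq 4$, the evaluation point $s=k/2$ satisfies $\Re(s)\geq 2>1$, so $F_{\beta,m}(\tau,k/2)$ lies in the range of absolute convergence of the Poincaré series and the unfolding may be carried out term by term. At this point the seed degenerates to a holomorphic exponential: from $\CM_{m,k}(v,k/2)=e^{-2\pi mv}\Gamma(k)^{-1}$ one gets $\varphi_{m,k}(\tau,k/2)=\Gamma(k)^{-1}e(m\tau)$, and the hypothesis $Q(\beta)=m$ guarantees that $(\beta,m)$ is a consistent index, so that $L_{\beta,m}$ is the coset that will survive.

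The first step is to unfold the regularized integral. Because $\siegTheta{k}{L}(\tau,Z)$ transforms with weight $k$ for $\rho_L$ in $\tau$, while $\varphi_{m,k}(\tau,k/2)\Fe_\beta$ transforms the same way after slashing, the density $\langle\,\cdot\,,\siegTheta{k}{L}\rangle\,v^k\,\tfrac{du\,dv}{v^2}$ is $\Mp_2(\BZ)$-invariant; this uses that $\rho_L$ is unitary and that $v^k|c\tau+d|^{-2k}=(\Im\gamma\tau)^k$. Writing $F_{\beta,m}$ as its average over $\tilde{\Gamma}_\infty\backslash\Mp_2(\BZ)$ and transferring each slash onto this invariant density, the translates of $\CF$ reassemble a fundamental domain for $\tilde{\Gamma}_\infty$, namely a vertical strip of width one. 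I would thus replace $\Phi(F_{\beta,m},Z)$ by a single integral of $\langle\varphi_{m,k}(\tau,k/2)\Fe_\beta,\siegTheta{k}{L}(\tau,Z)\rangle$ over that strip.

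Next I would extract components and evaluate. Pairing $\Fe_\beta$ against $\siegTheta{k}{L}$ and conjugating, the key algebraic input is the identity $|q_Z(X)|^2=4\,\Im(z_1)\Im(z_2)\,Q(X_Z)$, which holds because the frame vectors satisfy $Q(X_i(Z))=\pm1$ (a direct check gives $Q(X_1(Z))=1$). This rewrites the conjugated coefficient $\overline{\bigl(q_Z(X)/(\Im(z_1)\Im(z_2))\bigr)^k}$ as $\bigl(4Q(X_Z)\bigr)^k q_Z(X)^{-k}$, which is how the kernel's holomorphic factor becomes the $q_Z(X)^{-k}$ appearing in $\omega_{\beta,m}$. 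The $u$-integral $\int_0^1 e\bigl((m-Q(X))u\bigr)\,du$ then forces $Q(X)=m$, cutting the lattice sum down to $L_{\beta,m}$. On the surviving terms $Q(X_{Z^\perp})=m-Q(X_Z)$, so the $v$-exponential collapses to $e^{-4\pi Q(X_Z)v}$ and the remaining integral is the Gamma integral $\int_0^\infty e^{-4\pi Q(X_Z)v}v^{k-1}\,dv=\Gamma(k)\bigl(4\pi Q(X_Z)\bigr)^{-k}$, whose factor $\bigl(4Q(X_Z)\bigr)^{-k}$ exactly cancels the $\bigl(4Q(X_Z)\bigr)^k$ above while its $\Gamma(k)$ cancels the prefactor $\Gamma(k)^{-1}$.

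Collecting the surviving powers of $2$ and $\pi$ against the definition $\omega_{\beta,m}(Z)=\tfrac{(k-1)!}{2(2\pi)^k}\sum_{X\in L_{\beta,m}}q_Z(X)^{-k}$ then produces the constant $\tfrac{2^{k+2}}{(k-1)!}$. I expect the main obstacle to be precisely this constant bookkeeping rather than any conceptual step: one must correctly account for the factor $v$ in the theta kernel, the normalization $Q(X_i(Z))=\pm1$ (which fixes the $4$ in $|q_Z(X)|^2=4\Im(z_1)\Im(z_2)Q(X_Z)$), and, above all, the interaction between the leading $\tfrac12$ in the definition of $F_{\beta,m}$ and the central/$\pm\beta$ bookkeeping of $\Mp_2(\BZ)$ (equivalently the $\Fe_\beta+\Fe_{-\beta}$ symmetrization), which is what contributes the final factor of $2$. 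A secondary point needing care is convergence: for $m<0$ the seed grows like $e^{2\pi|m|v}$ at the cusp, so one must check that after unfolding each per-$X$ integral still converges, which it does since $Q(X_Z)>0$ away from the Hirzebruch-Zagier divisor $T_m$, and that the regularization $\regint$ is compatible with the term-by-term unfolding.
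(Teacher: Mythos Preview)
Your proposal is correct, and in fact it takes a cleaner route than the paper. The paper keeps the spectral parameter $s$ free throughout: after unfolding it evaluates the $v$-integral as a Laplace transform of the Whittaker function $M_{-k/2,s-1/2}$ against the Gaussian decay of the theta kernel, obtaining a Gauss hypergeometric $\leftindex_2{F}_1[s+\tfrac{k}{2},s+\tfrac{k}{2},2s;\,m/Q(X_{Z^\perp})]$ via a table of integrals, and only then specializes to $s=k/2$, where the hypergeometric collapses to $(1-m/Q(X_{Z^\perp}))^{-k}$. You instead specialize at the outset, so that the seed becomes the pure exponential $\Gamma(k)^{-1}e(m\tau)$ and the $v$-integral reduces to the elementary Gamma integral $\int_0^\infty e^{-4\pi Q(X_Z)v}v^{k-1}\,dv$. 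Both paths use the same structural ingredients (unfolding, the $u$-integral cutting to $Q(X)=m$, and the identity $|q_Z(X)|^2=4\Im(z_1)\Im(z_2)Q(X_Z)$) and arrive at the same constant; the paper's approach has the merit of exhibiting the lift for general $s$, which can matter for analytic continuation arguments, whereas yours is shorter and avoids external input from integral tables. Your caveats about the $\tfrac12$-versus-center bookkeeping and about per-$X$ convergence off $T_m$ for $m<0$ are exactly the points where care is needed.
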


\begin{proof}
    To simplify the notation, let us write $\Phi_{\beta,m}(Z,s)$ for the lift $\Phi(F_{\beta, m}(\tau, s), Z)$.
    By the usual unfolding argument, we obtain 
    \[\Phi_{\beta,m}(Z,s) = 2 \int_{v=0}^\infty \int_{u=0}^1 \CM_{m,k}(v,s) e(mu) \overline{\siegtheta{k}{L, \beta}(\tau, Z)} d\mu(\tau).\]
    Let us plug in the Fourier expansion
    \[\siegtheta{k}{L, \beta}(\tau, Z) = v \sum_{X \in L+\beta} \left(\frac{q_Z(X)}{\Im(z_1)\Im(z_2)}\right)^k \exp\left(-2\pi v \left(Q(X_Z) - Q(X_{Z^\perp})\right)\right) e\left(Q(X)u\right)\]
    and integrate over $u$. It yields,
    \begin{align*}
        \Phi_{\beta, m}(Z,s) = &\frac{2(4 \pi \abs{m})^{-k/2}}{\Gamma(2s)} \sum_{\substack{X \in L + \beta \\ Q(X) = m}} \left(\frac{\overline{q_Z(X)}}{\Im(z_1)\Im(z_2)}\right)^k \\
        &\times \int_{v=0}^\infty v^{\frac{k}{2}-1} M_{-k/2, s-1/2}(4\pi \abs{m}v) \exp(2\pi v(2 Q(X_{Z^\perp}) - m)) dv.
    \end{align*}
    As Bruinier noticed in~\cite{bruinier2002}, the last integral is a Laplace transform. We use \cite{tableint} page 215 (11) and get
    \begin{align*}
        \Phi_{\beta, m}(Z,s) = & \ \frac{2 (4 \pi \abs{m})^{s-k/2}}{\Gamma(2s)} \sum_{\substack{X \in L + \beta \\ Q(X) = m}} \left(\frac{\overline{q_Z(X)}}{\Im(z_1)\Im(z_2)}\right)^k \\ 
        & \times \Gamma\left(s+\frac{k}{2}\right)(-4\pi Q(X_{Z^\perp}))^{-s-\frac{k}{2}} \leftindex_2{F}_1\left[s+\frac{k}{2}, s+\frac{k}{2}, 2s, \frac{m}{Q(X_{Z^\perp})}\right].
    \end{align*}
    At the harmonic point $s = k/2$, we have 
    \[\leftindex_2{F}_1 \left[k,k,k, \frac{m}{Q(X_{Z^\perp})} \right] = \left(1-\frac{m}{Q(X_{Z^\perp})}\right)^{-k}.\]
    So, in total, we arrive to
    \begin{align*}
        \Phi_{\beta, m}(Z, k/2) & = 2 \sum_{\substack{X \in L + \beta \\ Q(X) = m}} \left(\frac{\overline{q_Z(X)}}{\Im(z_1)\Im(z_2)}\right)^k
        (-4\pi Q(X_{Z^\perp}))^{-k} \left(1-\frac{m}{Q(X_{Z^\perp})}\right)^{-k} \\
        & = 2\sum_{\substack{X \in L+\beta \\ Q(X) = m}} \left( \frac{\overline{q_Z(X)}}{4\pi\Im(z_1)\Im(z_2)Q(X_{Z})} \right)^k.
     \end{align*}
    Since
    \begin{align*}
        Q(X_Z) & = \frac{1}{4\Im(z_1)\Im(z_2)} \abs{(X,M(Z))}^2 = \frac{1}{4\Im(z_1)\Im(z_2)} \abs{q_Z(X)}^2,
    \end{align*}
    we conclude that
    \[\Phi_{\beta, m}(Z,k/2) = 2\cdot \pi^{-k}\sum_{\substack{X \in L + \beta \\ Q(X) = m}} q_Z(X)^{-k} = \frac{2^{k+2}}{(k-1)!}\omega_{\beta, m}(Z).\]
    This finishes the proof.
\end{proof}

\section{Divisibility of  the coefficients of Hilbert modular forms} \label{sec:IntAndDiv}

\subsection{The Fourier coefficients of the lift} 

We now use a result of Borcherds (Theorem 14.3 in \cite{borcherds1998}) to obtain an explicit formula for the Fourier coefficients of the lift.

\begin{theorem}
    Let $K$ be the lattice $\set{\bpsm 0 & \nu' \\ \nu & 0 \epsm : \nu \in \CO_F}$ so that $K' \cong \invdiff{F}$.
    Let 
    \[F(\tau) = \sum_{\gamma \in L'/L} \sum_{\substack{n \in \BZ + Q(\gamma) \\ n \gg -\infty}} c_\gamma(n) \Fe_\gamma(n\tau) \in M^!_{k, \rho_L}.\]
    In the Weyl chamber $\CW$, we have 
    \begin{align}
      C \cdot \Phi(F,Z) & = -c_0(0)\frac{B_k}{k} + 2\sum_{n > 0} \sum_{\substack{\lambda(\nu) = \bpsm 0 & \nu' \\ \nu & 0 \epsm \in K' \\ (\lambda, \CW) > 0}} e(n(\nu z_1 + \nu' z_2)) n^{k-1} c_{\lambda(\nu)}(N(\nu)) \nonumber \\
      & = -c_0(0) \frac{B_k}{k} + 2\sum_{\substack{\nu \in \invdiff{F} \\ (\nu, \CW) > 0}} \left[ \sum_{\substack{d \in \BN \\ d \mid \nu}} d^{k-1} c_{\lambda(\nu/d)}\left(\frac{N(\nu)}{d^2}\right) \right] e(\tr(\nu Z)).
    \end{align} \label{eq:coeffsDoiNaganuma}
    for $Z = \bpsm 0 & z_1 \\ z_2 & 0 \epsm \in W(\BC)$, where the condition $(\nu, \CW) > 0$ is a shorthand notation for $\nu y_1 + \nu' y_2 > 0$ for all $(z_1, z_2) \in \CW$ and $C = \sqrt{2}^{2-k}i^k$.
\end{theorem}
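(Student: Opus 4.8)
The plan is to obtain the stated expansion as a direct application of Borcherds' Theorem 14.3 in \cite{borcherds1998}, once the lattice $L$ is presented in the form that theorem requires and all normalizations are reconciled. The first step is to split off a hyperbolic plane: I would write $L = U \oplus K$ orthogonally, where $U = \set{\bpsm a & 0 \\ 0 & b \epsm : a,b \in \BZ}$ is the diagonal sublattice and $K$ is the off-diagonal lattice of the statement. A short computation with the bilinear form $(X,Y) = -\tr(XY^*)$ shows that these two sublattices are orthogonal, that $U$ is an even unimodular hyperbolic plane, and that on $K$ one has $Q\bigl(\bpsm 0 & \nu' \\ \nu & 0 \epsm\bigr) = N(\nu)$ with associated bilinear form $(\lambda(\nu), \lambda(\mu)) = \tr(\nu\mu')$. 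Since $\set{\nu : \tr(\nu \CO_F) \subseteq \BZ} = \invdiff{F}$, this identifies $K' \cong \invdiff{F}$ as claimed and fixes the primitive isotropic vector $z$ (a generator of one factor of $U$) needed to set Borcherds' machinery in motion. Under this choice the tube-domain coordinate becomes $Z = \bpsm 0 & z_1 \\ z_2 & 0 \epsm$, the pairing $(\lambda(\nu), Z)$ becomes $\tr(\nu Z) = \nu z_1 + \nu' z_2$, and the positivity condition $(\lambda, \CW) > 0$ of the theorem is exactly the Weyl-chamber condition appearing in Borcherds' formula.

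The second step is to feed the polynomial kernel into Theorem 14.3, in the weight regime where the input weight $k$ and the output parallel weight $k$ match for signature $(2,2)$. Our theta function $\siegTheta{k}{L}$ carries the homogeneous polynomial $(q_Z(X)/\Im(z_1)\Im(z_2))^k$, which is holomorphic of degree $k$ in the positive-definite part; for such an input Borcherds' theorem expresses $\Phi(F,Z)$ as a constant term proportional to $c_0(0)$ together with a sum over $\lambda \in K'$ with $(\lambda, \CW) > 0$, whose $\lambda$-term pairs the Fourier coefficient $c_\lambda(Q(\lambda))$ with an elementary factor coming from the Lipschitz-type summation of the polynomial part along the $z$-direction. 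Because the polynomial is the $k$-th power $q_Z(X)^k$, that summation collapses to $\sum_{n > 0} n^{k-1} e(n\,\tr(\nu Z))$, producing the first displayed line; the constant term is the $\lambda = 0$ contribution, where the special value $\zeta(1-k) = -B_k/k$ supplies the Bernoulli factor $-c_0(0) B_k/k$. Passing from the first line to the second is then the routine reindexing $\lambda(\nu) = d\,\lambda(\nu/d)$: collecting, for each $\nu \in \invdiff{F}$, all factorizations $\nu = d\mu$ with $d \in \BN$ and $\mu \in \invdiff{F}$ turns the double sum over $(\lambda, n)$ into the divisor sum $\sum_{d \mid \nu} d^{k-1} c_{\lambda(\nu/d)}(N(\nu)/d^2)$.

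The main obstacle will be the bookkeeping of normalizations rather than any conceptual step. Borcherds works with his own conventions for the Weil representation, for the scaling of the quadratic form, and for the invariant measure in the regularized integral, while our lift $\Phi$ and kernel $\siegTheta{k}{L}$ are normalized as in \Cref{sec:DoiNagLift}; reconciling the two forces out the powers of $2$ (including the half-integral power coming from the relation $Q(X_Z) = \abs{q_Z(X)}^2/(4\Im(z_1)\Im(z_2))$ together with the Weil-representation normalization) and the phase $i^k$ carried by $q_Z(X)^k$. Tracking every such factor is precisely what fixes the constant $C = \sqrt{2}^{2-k} i^k$ on the left-hand side, and I expect this to be the only delicate part of the argument.
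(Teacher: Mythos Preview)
Your proposal is correct and follows the same approach as the paper: the paper does not give a detailed proof of this theorem but simply invokes Borcherds' Theorem~14.3 in \cite{borcherds1998}, so your plan of splitting $L = U \oplus K$, matching the tube-domain coordinate and Weyl-chamber condition, and then reading off the Fourier expansion from Borcherds' formula (with the constant $C$ coming from the normalization comparison) is exactly what is intended. In fact you have supplied more detail than the paper itself, which leaves the lattice decomposition, the reindexing into the divisor sum, and the tracking of the constant $C=\sqrt{2}^{\,2-k}i^k$ implicit.
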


The second way of writing the expansion of $\Phi(F,Z)$ will come in handy to study its integrality. In the following, we just write $c_\nu$ instead of $c_{\lambda(\nu)}$ for $\nu \in \invdiff{F}$.

\subsection{Integrality and divisibility} 

Analysing \eqref{eq:coeffsDoiNaganuma} leads to the following observations:
\begin{itemize}
  \item If the coefficients $c_{\nu/d}\left(\frac{N(\nu)}{d^2}\right)$ of $F$ are rational for all $\nu \in \invdiff{F}$ and $d \in \BN$ such that $(\nu, \CW) > 0$ and $d \mid \nu$, then the coefficients of the lift of $F$ are rational.
  \item If the denominators of the said coefficients divide $d^{k-1}$, then the coefficients of the lift are integral.
\end{itemize}

The result that we obtain falls in between these in the following sense: we'll show that for each Maass-Poincaré series $F_{\beta, m}$ there exists an integer $\delta_{\beta,m}$ such that the coefficients of the lift $\Phi(F_{\beta, m}(\tau, k/2), Z)$ become integral when multiplied by $\delta_{\beta,m}$.
This property will be referred to as \emph{``having bounded denominators''}.
In order to prove this we will investigate $F_{\beta, m}$, the inverse image of $\omega_{\beta, m}$ via our theta lift. We'll use the following result of McGraw \cite{McGraw2003}:

\begin{theorem} \label{thm:corMcGraw}
  The space $M_{k, \rho_L}^!$ of weakly holomorphic modular forms of weight $k$ for the Weil representation attached to $L$ has a basis of modular forms all of whose have Fourier coefficients with bounded denominators.
\end{theorem}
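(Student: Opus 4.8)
The plan is to deduce the statement from two standard inputs: the integral $q$-expansion structure of scalar weakly holomorphic modular forms on a principal congruence subgroup, and a Galois-descent argument that repackages such scalar forms into vector-valued forms for $\rho_L$. Let $N$ denote the level of $L$, the smallest positive integer with $NQ(\beta) \in \BZ$ for all $\beta \in L'/L$. Since $L$ has signature $(2,2)$ we have $b^+ - b^- = 0 \equiv 0 \pmod 4$, so $\rho_L$ is a genuine (non-metaplectic) representation of $\SL_2(\BZ)$; it factors through the finite quotient $\SL_2(\BZ/N\BZ)$, and $\Gamma(N)$ lies in its kernel. First I would use this to show that each component $f_\beta$ of a form $f = \sum_\beta f_\beta \Fe_\beta \in M^!_{k,\rho_L}$ is an ordinary scalar weakly holomorphic modular form of weight $k$ on $\Gamma(N)$ (with trivial multiplier, as $k$ is an even integer), whose expansion runs over exponents $n \in \BZ + Q(\beta) \subseteq \frac1N\BZ$, i.e.\ in powers of $e(\tau/N)$.

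Next I would invoke the arithmetic theory of the modular curve $X(N)$ over $\BZ[1/N, \zeta_N]$ together with the $q$-expansion principle: the weakly holomorphic modular forms of weight $k$ on $\Gamma(N)$ admit a basis all of whose Fourier coefficients lie in $\BZ[1/N, \zeta_N]$. This already shows that individual components can be chosen with coefficients in a fixed cyclotomic ring and with denominators dividing a power of $N$; the remaining task is to organize these scalar forms into genuine $\rho_L$-valued forms defined over $\BQ$ while retaining the same denominator control.

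The heart of the argument is a semilinear Galois action. For $\sigma_t \in \operatorname{Gal}(\BQ(\zeta_N)/\BQ)$ with $\sigma_t(\zeta_N) = \zeta_N^t$, I would apply $\sigma_t$ to the Fourier coefficients of $f$ and compare the result with the effect of the permutation $\Fe_\beta \mapsto \Fe_{t\beta}$ of the discriminant group. Using the explicit formulas for $\rho_L(S)$ and $\rho_L(T)$ — in particular the transformation of the normalizing Gauss sum and of the phases $e(Q(\beta))$ under $\sigma_t$ — one checks that $\sigma_t(\rho_L(\gamma))$ is conjugate to $\rho_L(\gamma)$ by this permutation matrix. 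Hence the Galois conjugate of a form in $M^!_{k,\rho_L}$, after reindexing by $t$, again lies in $M^!_{k,\rho_L}$, so the space carries a compatible semilinear action of $\operatorname{Gal}(\BQ(\zeta_N)/\BQ)$. Galois descent (Hilbert 90 for this action) then produces a $\BQ$-basis of $M^!_{k,\rho_L}$; intersecting this rational structure with the integral lattice from the previous step and taking Galois invariants yields a basis whose coefficients lie in $\BZ[1/N]$, i.e.\ with bounded denominators.

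The main obstacle I anticipate is the precise bookkeeping in this last step: verifying that cyclotomic Galois conjugation on the Weil representation is exactly intertwined by multiplication-by-$t$ on $L'/L$, handling the sign arising from the Gauss-sum normalization (harmless here only because the signature is $0 \bmod 4$), and then controlling the ramification at the primes dividing $N$ so that the descended lattice genuinely has denominators bounded by a power of $N$ rather than merely being defined over $\BQ$. Making this bound clean also requires care at the non-infinite cusps of $X(N)$, where the local expansions are a priori only defined over $\BZ[1/N, \zeta_N]$.
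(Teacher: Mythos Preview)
The paper does not supply its own proof of this statement; it is simply quoted as a result of McGraw \cite{McGraw2003}. Your sketch is, in outline, exactly the strategy McGraw carries out: restrict to $\Gamma(N)$ so that the components of a $\rho_L$-valued form become scalar forms, use the arithmetic of the modular curve to obtain cyclotomic integrality of those scalar forms, and then exploit the compatibility between $\operatorname{Gal}(\BQ(\zeta_N)/\BQ)$ and multiplication-by-$t$ on $L'/L$ to descend the $\rho_L$-structure to $\BQ$. McGraw treats the holomorphic case; the extension to $M^!_{k,\rho_L}$ is the routine observation that dividing by a power of $\Delta$ preserves rationality and denominator bounds, which you are tacitly invoking when you claim a cyclotomic-integral basis of weakly holomorphic scalar forms. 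The ``obstacle'' you flag --- the precise Galois bookkeeping on the Weil representation --- is exactly the content of McGraw's computation, so the paper is right to simply cite it rather than reproduce it.

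One small overreach: asserting that the descended basis has coefficients in $\BZ[1/N]$ is stronger than what the theorem claims and stronger than what McGraw proves. The statement here only requires that each basis element individually have bounded denominators. That follows formally once the $\BQ$-structure is established: each filtered piece of $M^!_{k,\rho_L}$ by pole order is finite-dimensional over $\BQ$, hence spanned by finitely many forms with rational coefficients, each of which has a single integer clearing its denominators. A uniform $N$-power bound across the whole space would need an additional argument you have not supplied, but the paper does not need it.
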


Moreover, it follows easily from the next proposition, due to Bruinier-Funke \cite{bruinierfunke2004}, that when the space of cusps forms $S_{k, \overline{\rho}_L}$ is trivial, every harmonic Maass form of dual weight $2-k$ for $\rho_L$ is weakly holomorphic.

\begin{theorem} \label{thm:pairingWeaklyHolom}
    Let $F \in H_{2-k, \rho_L}$. Then $F$ is weakly holomorphic if and only if the pairing
    \[\{F,g\} = \sum_{\gamma \in L'/L}\sum_{n > 0} a_g(\gamma, n)a_F(\gamma, -n)\]
    vanishes for all cusp forms $g \in S_{k,\overline{\rho}_L}$.
\end{theorem}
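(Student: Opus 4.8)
The plan is to recast the pairing $\{F,g\}$ as a Petersson inner product by means of the antilinear operator
\[\xi_{2-k}(F) = 2i\, v^{2-k}\,\overline{\partial_{\overline\tau} F} = v^{-k}\,\overline{L_{2-k}F},\]
which carries $H_{2-k,\rho_L}$ into $S_{k,\overline{\rho}_L}$. Writing $F = F^+ + F^-$ for the canonical splitting of a harmonic Maass form into its holomorphic and non-holomorphic parts, two observations drive the argument. First, $\xi_{2-k}(F)=0$ holds exactly when $\partial_{\overline\tau}F=0$, i.e. when $F^-=0$, which is precisely the condition that $F$ be weakly holomorphic. Second, the coefficients $a_F(\gamma,-n)$ with $n>0$ entering $\{F,g\}$ are exactly the coefficients of the holomorphic part $F^+$, so $\{F,g\}$ only sees $F^+$.

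The core step is the pairing identity
\[\{F,g\} = \overline{\left(\xi_{2-k}(F),\, g\right)},\]
where $(\cdot,\cdot)$ is the Petersson product on $S_{k,\overline{\rho}_L}$; the complex conjugate is harmless for the purpose of detecting vanishing. To prove it I would introduce the $\Mp_2(\BZ)$-invariant $1$-form $\alpha = \sum_{\gamma\in L'/L} F_\gamma\, g_\gamma\, d\tau$, where $F=\sum_\gamma F_\gamma\Fe_\gamma$ and $g=\sum_\gamma g_\gamma\Fe_\gamma$. Invariance holds because $g$ transforms under the dual representation $\overline{\rho}_L$, so the componentwise sum $\sum_\gamma F_\gamma g_\gamma$ pairs $\rho_L$ against its dual, and the weights $2-k$ and $k$ combine with the weight $-2$ of $d\tau$ to give total weight zero. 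Since $g$ is holomorphic, $d\alpha = \sum_\gamma (\partial_{\overline\tau}F_\gamma)\, g_\gamma\, d\overline\tau\wedge d\tau$, and matching this against the integrand of $(\xi_{2-k}(F),g)$ gives $\int_{\CF} d\alpha = -\overline{(\xi_{2-k}(F),g)}$. Applying Stokes' theorem on the truncated domain $\CF_T = \{\tau\in\CF : v\le T\}$, the edges identified by $S$ and $T$ cancel by invariance of $\alpha$, leaving only the horizontal edge at height $T$; letting $T\to\infty$ this boundary integral tends to the constant Fourier coefficient of $\sum_\gamma F_\gamma g_\gamma$, which is $\sum_\gamma\sum_{n>0} a_F(\gamma,-n)a_g(\gamma,n) = \{F,g\}$.

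Granting the identity, the theorem follows at once. If $F$ is weakly holomorphic then $\xi_{2-k}(F)=0$, so $\{F,g\}=0$ for every $g$. Conversely, if $\{F,g\}=0$ for all $g\in S_{k,\overline{\rho}_L}$, then $(\xi_{2-k}(F),g)=0$ for all such $g$; since $\xi_{2-k}(F)$ is itself a cusp form in $S_{k,\overline{\rho}_L}$, choosing $g=\xi_{2-k}(F)$ and using positive-definiteness of the Petersson product forces $\xi_{2-k}(F)=0$, hence $F^-=0$ and $F$ is weakly holomorphic.

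The main obstacle is the analytic justification of the Stokes-and-limit step. Because the principal part of $F^+$ makes $F$ grow exponentially at the cusp, one must confirm that the relevant integrals converge and that the top-edge term has a well-defined limit. This is exactly where it is essential that $g$ is a genuine cusp form: its exponential decay dominates the growth of $F$ and, in particular, annihilates the entire contribution of the non-holomorphic part $F^-$ (which grows only polynomially) in the limiting boundary integral, leaving the holomorphic coefficients alone. One must also check that $\xi_{2-k}$ lands in $S_{k,\overline{\rho}_L}$ rather than merely in $M^!_{k,\overline{\rho}_L}$; this is built into the Bruinier--Funke definition of $H_{2-k,\rho_L}$ and is what legitimizes the self-pairing step in the converse direction.
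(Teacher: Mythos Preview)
Your argument is correct and is precisely the Bruinier--Funke proof via the $\xi_{2-k}$-operator and Stokes' theorem; the paper does not supply its own proof but simply attributes the result to \cite{bruinierfunke2004}, so your approach is exactly the one being cited.
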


We make the assumption $S_{k, \overline{\rho}_L} = \{0\}$. It follows from \Cref{thm:pairingWeaklyHolom} and \Cref{thm:corMcGraw} that the coefficients of $F_{\beta, m}$ have bounded denominators. Let us denote by $\delta_{\beta, m}$ the smallest integer that clears their denominators.
It follows that the Fourier coefficients of the Doi-Naganuma lift of $F_{\beta, m}$ given by \Cref{eq:coeffsDoiNaganuma} have rational coefficients whose denominators are cleared by $\delta_{\beta, m}$ as well.
To finish the proof of \Cref{thm:maintheorem}, we want to show that the coefficient of index $(\nu/d, N(\nu)/d^2)$ of the input of the lift is divisible by $(N(\nu)/d^2)^{k-1}.$ 

Fix $2 \leq k \in \BZ$ and consider $f \in H_{2-k, \rho_L}$. It follows from \cite{bringmannono2017} Theorem 5.5 that $\CD^{k-1}(f)$ belongs to $M^!_{k, \rho_L}$ with Fourier expansion
\begin{equation} \label{eq:FourierCoeffsBol}
    \CD^{k-1}f(\tau) \ = \ \sum_{\gamma \in L'/L} \sum_{\substack{n \in \BZ + Q(\gamma) \\ n \gg -\infty}} c_\gamma^+(n)n^{k-1} \Fe_\gamma(n\tau),
\end{equation}
where the $c^+_\gamma(n)$ denote the coefficients of the holomorphic part of the component $f_\gamma \Fe_\gamma$ of $f$.
We derive the following result from this observations.

\begin{proposition}\label{prop:implicationIntegrality}
    Let $g \in \CD^{k-1}(H_{2-k, \rho_L}) \subset M_{k, \rho_L}^!$ and assume that $S_{k, \overline{\rho}_L} = \{0\}$.
        Then, up to muliplication by $D^{k-1}$ and by an integer that doesn't depend on $\gamma \in L'/L$ nor on $n \in \BZ + Q(\gamma)$, the coefficient of index $(n,\gamma)$ of $g$
        is divisible by $(Dn)^{k-1}$.
\end{proposition}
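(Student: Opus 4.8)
The plan is to reduce the divisibility statement to a bounded-denominators statement for the holomorphic coefficients of the preimage $f$, where $g = \CD^{k-1}(f)$, and then to absorb the power of $D$ by pure bookkeeping. By \eqref{eq:FourierCoeffsBol} the coefficient of index $(n,\gamma)$ of $g$ is $a_g(\gamma,n) = c_\gamma^+(n)\, n^{k-1}$, where the $c_\gamma^+(n)$ are the holomorphic-part coefficients of $f$. Since the discriminant form on $L'/L$ takes values in $\frac{1}{D}\BZ/\BZ$, every exponent $n \in \BZ + Q(\gamma)$ lies in $\frac{1}{D}\BZ$, so $Dn \in \BZ$ and
\[D^{k-1} a_g(\gamma, n) = c_\gamma^+(n)\,(Dn)^{k-1}.\]
Hence, as soon as one knows there is a single integer $\kappa$, independent of $\gamma$ and $n$, with $\kappa\, c_\gamma^+(n) \in \BZ$ for all $\gamma$ and $n$, one gets $\kappa D^{k-1} a_g(\gamma, n) = \bigl(\kappa\, c_\gamma^+(n)\bigr)(Dn)^{k-1}$, which is manifestly divisible by the integer $(Dn)^{k-1}$. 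This is precisely the asserted conclusion, the two announced multipliers being $D^{k-1}$ and $\kappa$.

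It therefore remains to produce such a $\kappa$, i.e. to show that the holomorphic coefficients $c_\gamma^+(n)$ of $f$ have bounded denominators. First I would use the hypothesis $S_{k,\overline{\rho}_L} = \{0\}$: by \Cref{thm:pairingWeaklyHolom} the pairing $\{f,\cdot\}$ vanishes vacuously, so $f$ is in fact weakly holomorphic, $f \in M^!_{2-k, \rho_L}$, its non-holomorphic part vanishes, and the $c_\gamma^+(n) = c_\gamma(n)$ are the genuine Fourier coefficients of $f$. I would then invoke McGraw's theorem (\Cref{thm:corMcGraw}), applied one weight down in weight $2-k$, where it holds for weakly holomorphic forms of the Weil representation just as in weight $k$: the space $M^!_{2-k,\rho_L}$ is spanned by forms with rational coefficients of bounded denominator, and any weakly holomorphic form with rational coefficients, being a rational combination of such a basis, itself has bounded denominators. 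The forms of interest (such as $F_{\beta,m}$) have rational principal part, so by weak holomorphy they lie in this rational span, yielding a uniform denominator $\kappa$.

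The main obstacle is this bounded-denominators input itself: \emph{a priori} the Fourier coefficients of a weakly holomorphic vector-valued form, though rational, could have denominators growing with $n$, in which case the statement would simply be false; it is exactly McGraw's structural result that forbids this. The two genuinely necessary ingredients are thus (i) the reduction of $f$ to a weakly holomorphic form, which is where $S_{k,\overline{\rho}_L} = \{0\}$ enters via the Bruinier-Funke pairing, and (ii) the passage from rational coefficients to bounded denominators supplied by McGraw. I should stress that it does \emph{not} suffice to apply McGraw to $g$ directly in weight $k$: knowing only that $a_g(\gamma,n) = c_\gamma^+(n)\,n^{k-1}$ has bounded denominators gives no control on the denominators of $c_\gamma^+(n)$, since dividing out $n^{k-1}$ can reintroduce arbitrarily large denominators as $n$ varies. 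Working with $f$ in weight $2-k$, rather than with $g$ in weight $k$, is what makes the argument go through.
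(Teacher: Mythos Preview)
Your proposal is correct and follows essentially the same argument as the paper: use $S_{k,\overline{\rho}_L}=\{0\}$ together with \Cref{thm:pairingWeaklyHolom} to force the preimage $f$ to be weakly holomorphic, invoke McGraw in weight $2-k$ to bound the denominators of the $c_\gamma(n)$, and then absorb the factor $D^{k-1}$ using $n\in\frac{1}{D}\BZ$. Your additional remarks on why McGraw must be applied to $f$ rather than $g$, and on the need for a rational principal part to land in the rational span, are not in the paper's proof but are helpful clarifications rather than a different approach.
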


\begin{proof}
    The condition $S_{k, \overline{\rho}_L} = \set{0}$ implies that $H_{2-k, \rho_L} = M^!_{2-k, \rho_L}$. 
    Hence, the preimage of $g$ via $\CD^{k-1}$ has bounded denominators. Denote $\delta_g$ the smallest integer clearing them.
    The fact that $\gamma \in L'/L$ and $n \in \BZ + Q(\gamma)$ implies that $n \in \frac{1}{D}\BZ$. 
    Hence, the coefficient of index $(\gamma,n)$ of $\delta_g D^{k-1}g$ is integral and divisible by $(Dn)^{k-1}$.
\end{proof}

\begin{proof}[Proof of Theorem \Cref{thm:maintheorem}] \label{prf:maintheorem1}
    The previous corollary brings us close but doesn't conclude the proof. We analyse the image of $F_{\beta, m, 2-k}$ (the added subscript specifies the weight of the Maass-Poincaré series) under the Bol operator more finely. 
    To simplify the notation, we write $F_{\beta, m,k}(\tau)$ for the evaluation of $F_{\beta, m, k}(\tau, s)$ at the harmonic point $s = 1-k/2$.
    By \Cref{eq:FourierCoeffsBol}, we have on one hand
    \begin{align}
    & \CD^{k-1}(F_{\beta, m, 2-k})(\tau) \nonumber \\ & = m^{k-1}e(m\tau)(\Fe_\beta + \Fe_{-\beta}) + \sum_{\gamma \in L'/L}\sum_{\substack{n \in \BZ + Q(\gamma) \\ n \geq 0}} n^{k-1} b_\gamma(n) \Fe_\gamma(n\tau),
    \end{align}
    where $b_\gamma(n)$ denotes the coefficient of index $(\gamma, n)$ of $F_{\beta, m, 2-k}$.
    On the other hand,
    \[\CD^{k-1}(F_{\beta,m,2-k}) = m^{k-1} F_{\beta,m,k}\]
    as computed in \cite{bringmannono2017} Theorem 6.11. Set $\delta_{\beta,m}$ to be the smallest integer that clears the denominators of the coefficients of $F_{\beta, m, 2-k}$.
    Now, the function
    \[ \left( D^{k-1}\delta_{\beta, m} \right) \CD^{k-1}(F_{\beta,m, 2-k}) = (mD)^{k-1}\delta_{\beta, m} F_{\beta, m, k} \] 
    has integral coefficients and its coefficient of index $(\gamma, n)$ is divisible by the integer $(Dn)^{k-1}$.

    So, using equation \Cref{eq:coeffsDoiNaganuma}, we deduce that the Fourier coefficient of index $\nu \in \invdiff{F}$ of the lift $\Psi(Z,(mD)^{k-1}\delta_{\beta, m} F_{\beta, m, k})$ can be written as
    \[\sum_{\substack{d \in \BN \\ d \mid \nu}} \left(\frac{DN(\nu)}{d}\right)^{k-1} \tilde{c}_{\nu/d}\left(\frac{N(\nu)}{d^2}\right), \]
    for an integer $\tilde{c}_{\nu/d}\left(\frac{N(\nu)}{d^2}\right)$.

    Finally, for $\nu \in \invdiff{F}$, let $\nu_0 \in \invdiff{F}$ such that $\BQ \nu \cap L' = \BZ\nu_0$ and denote $\ell_\nu$ the unique integer such that 
    \[\nu = \ell_\nu \nu_0.\]
    We can now rewrite the last sum as
    \[(D\ell_\nu N(\nu_0))^{k-1} \sum_{\substack{d \in \BN \\ d \mid \ell_\nu}} \left(\frac{\ell_\nu}{d}\right)^{k-1} \tilde{c}_{\frac{\ell_\nu}{d}\nu_0}\left(N\left(\frac{\ell_\nu}{d}\nu_0\right)\right).\]
    This concludes the proof that, up to multiplication by an integer that doesn't depend on $\nu$, the $N(\nu)$-th coefficien of $\omega_m$ is divisible by $(D\ell_\nu N(\nu_0))^{k-1}$.
\end{proof}

\subsection{Non-trivial space of cusp forms}

We proved \Cref{thm:maintheorem} under the assumption that the space of cusp forms $S_{k, \overline{\rho}_L}$ was trivial. We now investigate the non-trivial case.
If we remove this condition, the preimage $F_{\beta, m, 2-k}$ of the Mass-Poincaré series $F_{\beta,m,k}$ via $\CD^{k-1}$ is a form whose coefficient might be irrational. 
Hence, the coefficients of its image via the Doi-Naganuma lift are a priori irrational. However, we are able to find linear combinations of the $\omega_{\beta,m}$'s for which the desired property holds.

\begin{theorem} \label{theo:mainTheorem}
    Let $k > 2$ be an even integer and let $G \in M_{2-k, \rho_L}^!$ with Fourier expansion $G(\tau) = \sum_{\gamma \in L'/L}\sum_{\substack{n \in \BZ + Q(\gamma) \\ -\infty \ll n}} c_G(\gamma, n) e(n\tau)$.
    Then, the coefficient of order $\nu \in \invdiff{F}$ of the meromorphic Hilbert modular form
    \begin{equation} \label{eq:defOmega}
        \sum_{\substack{\beta \in L'/L \\ n < 0}} c_G(\beta,n) \omega_{\beta, n}(Z)
    \end{equation}
    has bounded denominators and is divisible by $(D\ell_\nu N(\nu_0))^{k-1}$.
\end{theorem}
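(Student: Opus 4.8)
The plan is to realize the meromorphic form in \Cref{eq:defOmega} as a Doi--Naganuma theta lift and then transport integrality from the elliptic side to the orthogonal side. By \Cref{prop:unfoldingPoincSeries}, each $\omega_{\beta,n}$ is, up to the constant $\tfrac{2^{k+2}}{(k-1)!}$, the lift of the weight-$k$ Maass--Poincar\'e series $F_{\beta,n}(\tau,k/2)$; so the combination in \Cref{eq:defOmega} is a constant multiple of $\Phi(\CD^{k-1}G,Z)$, and the whole problem reduces to controlling the Fourier coefficients of $\CD^{k-1}G\in M^!_{k,\rho_L}$. The conceptual gain over \Cref{thm:maintheorem} is that we no longer invert the Bol operator on each $\omega_{\beta,n}$ individually --- the step that broke down once $S_{k,\overline{\rho}_L}\neq\{0\}$, because the preimages $F_{\beta,n,2-k}$ were only harmonic, not weakly holomorphic --- but instead feed in the genuinely weakly holomorphic form $G$ from the start.

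The crux, replacing the hypothesis $S_{k,\overline{\rho}_L}=\{0\}$, is the identification of $G$ with a Poincar\'e combination. The linear combination $\tfrac12\sum_{\beta,n<0}c_G(\beta,n)F_{\beta,n,2-k}$ of weight-$(2-k)$ Poincar\'e series has the same principal part as $G$; since the Bruinier--Funke pairing $\{\,\cdot\,,g\}$ of \Cref{thm:pairingWeaklyHolom} depends only on the principal part, this combination pairs trivially with every $g\in S_{k,\overline{\rho}_L}$ (the pairing agreeing with $\{G,g\}=0$, which holds as $G$ is weakly holomorphic), so the combination is itself weakly holomorphic. Because $k>2$ forces $M_{2-k,\rho_L}=\{0\}$, a weakly holomorphic form of weight $2-k$ is determined by its principal part, and the combination therefore equals $G$ exactly. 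Hence $\CD^{k-1}G=\tfrac12\sum_{\beta,n<0}c_G(\beta,n)\,n^{k-1}F_{\beta,n,k}$ with no stray holomorphic term, so $\Phi(\CD^{k-1}G,\cdot)$ is precisely a constant multiple of the $\omega$-combination in \Cref{eq:defOmega}. Applying \Cref{thm:corMcGraw} to the weakly holomorphic $G$ yields bounded denominators; let $\delta_G$ clear them.

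By \Cref{eq:FourierCoeffsBol} the coefficient of $\CD^{k-1}G$ at index $(\gamma,n)$ equals $c_G(\gamma,n)\,n^{k-1}$, and since every such $n$ lies in $\tfrac1D\BZ$, the $\nu$-independent integer $\delta_G D^{k-1}$ clears all denominators while rendering each coefficient divisible by $(Dn)^{k-1}$. I would then substitute this into \Cref{eq:coeffsDoiNaganuma}: the $\nu$-th coefficient of the lift is $\sum_{d\mid\nu}d^{k-1}$ times the coefficient at index $(\nu/d,N(\nu)/d^2)$, which carries the additional factor $(N(\nu)/d^2)^{k-1}$. Exactly as in the proof of \Cref{thm:maintheorem}, the product $d^{k-1}(N(\nu)/d^2)^{k-1}=(N(\nu)/d)^{k-1}$ combines with the $\tfrac1D\BZ$-structure so that $(D\ell_\nu N(\nu_0))^{k-1}$ factors out, leaving an integer after multiplication by $\delta_G D^{k-1}$ and the constant from $C=\sqrt{2}^{2-k}i^k$, with a denominator bound uniform in $\nu$. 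The main obstacle I anticipate is not conceptual but bookkeeping: the weight-$k$ Poincar\'e series acquire the index-dependent factor $n^{k-1}$ under $\CD^{k-1}$, so one must match \Cref{eq:defOmega} to $\Phi(\CD^{k-1}G,\cdot)$ with the correct per-index weights before the divisibility structure of \Cref{eq:coeffsDoiNaganuma} can be read off. The essential new input remains the vanishing $M_{2-k,\rho_L}=\{0\}$, which guarantees that the principal-part data $\{c_G(\beta,n)\}$ assembles into a weakly holomorphic --- hence bounded-denominator --- form, so that no irrational holomorphic contribution arising from $S_{k,\overline{\rho}_L}$ can appear.
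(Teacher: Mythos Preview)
Your overall strategy is the paper's, but there is a genuine gap at the step you flag as ``bookkeeping.'' After you correctly identify $G=\tfrac12\sum_{\beta,n<0}c_G(\beta,n)F_{\beta,n,2-k}$ and apply the Bol operator, you obtain
\[
\CD^{k-1}G \;=\; \tfrac12\sum_{\beta,n<0} c_G(\beta,n)\,n^{k-1}\,F_{\beta,n,k},
\]
so by \Cref{prop:unfoldingPoincSeries} the lift $\Phi(\CD^{k-1}G,\cdot)$ is, up to a global constant, the form $\sum_{\beta,n<0} c_G(\beta,n)\,n^{k-1}\,\omega_{\beta,n}$. This is \emph{not} a constant multiple of \Cref{eq:defOmega}: the factor $n^{k-1}$ depends on the summation index. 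Your argument therefore establishes bounded denominators and the $(D\ell_\nu N(\nu_0))^{k-1}$-divisibility for this twisted combination, which is a different Hilbert modular form from the one in the theorem, and the two families of linear combinations (over $G\in M^!_{2-k,\rho_L}$) do not obviously coincide.

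The paper corrects exactly this by pre-dividing: it feeds in
\[
\CF_G(\tau)\;=\;\frac{(k-1)!}{2^{k+2}}\sum_{\beta,n<0}\frac{c_G(\beta,n)}{n^{k-1}}\,F_{\beta,n,2-k}(\tau),
\]
so that the $n^{k-1}$ produced by $\CD^{k-1}$ cancels and $\Phi(\CD^{k-1}\CF_G,\cdot)$ really equals \Cref{eq:defOmega}. The price is that your ``same principal part as $G$'' argument no longer applies to $\CF_G$, whose principal part is the $n^{-(k-1)}$-reweighting of $G$'s. The vanishing $\{\CF_G,g\}=0$ for all $g\in S_{k,\overline{\rho}_L}$ comes instead from the identity $\xi_{2-k}F_{\beta,n,2-k}=c\,|n|^{k-1}P_{\beta,|n|,k}$ for the cuspidal Poincar\'e series $P_{\beta,|n|,k}$: the factor $|n|^{k-1}$ cancels the $n^{-(k-1)}$ in $\CF_G$, so the Petersson pairing $(\xi_{2-k}\CF_G,g)$ reduces to the \emph{unweighted} sum $\sum c_G(\beta,n)a_g(\beta,-n)=\{G,g\}=0$. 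That is the missing step; once $\CF_G\in M^!_{2-k,\rho_L}$, your use of \Cref{thm:corMcGraw} and the divisibility computation via \Cref{eq:coeffsDoiNaganuma} go through verbatim.
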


\begin{proof}
    Let $\Omega_{G}(Z)$ be the function defined in \Cref{eq:defOmega}.
    It is the image of 
    \[\CF_{G}(\tau) = \frac{(k-1)!}{2^{k+2}} \sum_{\substack{\beta \in L'/L \\ n < 0}} \frac{c_G(\beta, n)}{n^{k-1}} F_{\beta, n, 2-k}(\tau, k/2)\]
    via the composition of the Bol operator followed by the Doi-Naganuma lift.
    Since $G \in M_{2-k, \rho_L}^!$, the pairing from \Cref{thm:pairingWeaklyHolom} of $\CF_G(\tau)$ against any cusp form in $S_{k, \overline{\rho}_L}$ vanishes. 
    This implies in turn that $\CF_G(\tau) \in M_{2-k, \rho_L}^!$. 
    Now, the theorem follows via a similar reasoning as in the proof of \Cref{thm:maintheorem}.
\end{proof}

\printbibliography

\end{document}